\documentclass[12pt]{article}

\usepackage{amsmath, amssymb}
\usepackage{amsthm}
\usepackage{mathrsfs}
\usepackage[margin=1in]{geometry}
\usepackage{ifpdf}
\ifpdf
  \usepackage[hidelinks]{hyperref}
\else
  \usepackage[dvipdfmx,hidelinks]{hyperref}
\fi

\numberwithin{equation}{section}
\allowdisplaybreaks[3]

\title{On Weakly Separable Polynomials in Skew Polynomial Rings}
\author{Satoshi Yamanaka\\[0.25em]
\small Department of Integrated Science and Technology\\
\small National Institute of Technology, Tsuyama College\\
\small 624-1 Numa, Tsuyama-shi, Okayama, 708-8509, Japan\\
\small \texttt{yamanaka@tsuyama.kosen-ac.jp}}
\date{}

\newtheorem{thm}{\quad Theorem}[section]
\newtheorem{lem}[thm]{\quad Lemma}
\newtheorem{cor}[thm]{\quad Corollary}

\newtheorem{exm}[thm]{\quad Example}

\theoremstyle{remark}
\newtheorem{rmk}[thm]{\quad Remark}

\everymath{\displaystyle}

\def\ds{\displaystyle}

\def\De{\Delta}

\def\a{\alpha}

\def\P{\Phi}

\def\le{\left}
\def\ri{\right}

\def\ss{}

\def\al{\alpha}
\def\be{\beta}

\begin{document}

\maketitle

\begin{abstract}
The notion of weakly separable extensions was introduced by N.~Hamaguchi and A.~Nakajima as a generalization of separable extensions.
The purpose of this article is to give a characterization of weakly separable polynomials in skew polynomial rings.
Moreover, we shall show the relation between separability and weak separability in skew polynomial rings of derivation type.
\end{abstract}

\vspace{0.5cm}
\noindent
{\small {\bf Note for the arXiv version.} This manuscript corresponds to the article ``On Weakly Separable Polynomials in Skew Polynomial Rings,'' published in {\it Math. J. Okayama Univ.} {\bf 64} (2022), 47--61. The version of record is available at \href{https://doi.org/10.18926/mjou/62795}{10.18926/mjou/62795}.}
\vspace{0.5cm}

{\bf Mathematics Subject Classification:} Primary 16S36; Secondary 16S32\\

{\bf Keywords:} separable extension, weakly separable extension, skew polynomial ring, derivation.

\section{Introduction}

This paper is the continuation of the author's previous paper \cite{Y1}. 

Let $A/B$ be a ring extension with common identity $1$, and $M$ an $A$-$A$-bimodule. 
An additive map $\delta : A \to M$ is called a {\it $B$-derivation of $A$ to $M$} if 
$\delta(zw) = \delta(z)w + z\delta(w)$  for any $z,w \in A$  
and $\delta(B)=\{0\}$. 
Moreover,  a $B$-derivation $\delta$ of $A$ to $M$ 
 is called {\it inner} if there exists $m \in M$ such that $\delta(z)=mz -zm$ for any $ z \in A$.  
We say that  $A/B$ is {\it separable}
if the $A$-$A$-homomorphism of $A \otimes_BA$ onto $A$
defined by $\sum_j  z_j \otimes w_j \mapsto \sum_j z_j w_j$ ($z_j, w_j \in A$) splits. 
It is well known that $A/B$ is separable if and only if   
every $B$-derivation of $A$ to $N$ is inner for any $A$-$A$-bimodule $N$ (cf. \cite[Satz 4.2]{E}). 
 $A/B$ is called {\it weakly separable} if every $B$-derivation of $A$ to $A$ is inner. 
The notion of weakly separable extensions was  introduced by N. Hamaguchi and A. Nakajima 
as a generalization of  separable extensions (cf. \cite{HN}). 
Obviously, a separable extension is weakly separable.

Throughout this article,  let $B$ be a ring, 
$\rho$ an automorphism of $B$, and 
$D$ a $\rho$-derivation of $B$ (i.e.  $D$ is an additive endomorphism of $B$ such that 
$D(\alpha \beta) =D(\alpha) \rho(\beta) + \alpha D(\beta)$ for any $\alpha$, $\beta \in B$). 
By $B[X;\rho, D]$ we denote  the skew polynomial ring in which
the multiplication is given by  $\alpha X = X\rho(\alpha) + D(\alpha)$ for any $\alpha \in B$. 
We write $B[X; \rho] = B[X;\rho, 0]$ and $B[X; D] = B[X; 1, D]$. 
Moreover, by $B[X;\rho, D]_{(0)}$   
we denote the set of all monic polynomials $f$ 
in $B[X;\rho,D]$  such that $fB[X;\rho,D]=B[X;\rho,D]f$.  
For each polynomial $f \in B[X;\rho,D]_{(0)}$, 
the quotient ring 
$B[X;\rho,D]/fB[X;\rho,D]$ is a free ring extension of $B$. 
A polynomial $f$ in   
$B[X;\rho,D]_{(0)}$ is called {\it separable} (resp. {\it weakly separable}) 
 in 
$B[X;\rho,D]$ if   
$B[X;\rho,D]/fB[X;\rho,D]$ is  separable  
(resp. weakly separable) over $B$. 

Let $B^\rho = \{ \alpha \in B \, | \, \rho(\alpha) = \alpha \}$. 
In the previous paper \cite{Y1}, we studied  weakly separable polynomials over rings.  
In particular, we showed a necessary and sufficient condition 
for a polynomial $f \in B[X;\rho]_{(0)} \cap B^\rho[X]$  
(resp. a $p$-polynomial $f \in B[X;D]_{(0)}$ with a prime number $p$) 
to be weakly separable in $B[X;\rho]$ (resp. $B[X;D]$) (cf. \cite[Theorem 3.2 and  Theorem 3.8]{Y1}). 
The purpose of this paper is to give some improvements  and generalizations  of our results 
for the general skew polynomial ring $B[X;\rho,D]$.
In section 2, we shall mention briefly on  
some properties for polynomials in $B[X;\rho,D]_{(0)}$. 
In section 3, 
we shall give  a necessary and sufficient condition for 
a  polynomial $f$ in $B[X;\rho,D]_{(0)} \cap B^\rho[X]$ 
to be weakly separable in $B[X;\rho,D]$.    
Moreover, we shall show the relation between  separability and  weak separability  
in $B[X;D]$.

\ss
\section{Polynomials in $B[X;\rho,D]_{(0)}$}

In this section, we shall mention briefly on polynomials in $B[X;\rho,D]_{(0)}$. 
We  inductively define additive  endomorphisms  $\P_{[i,j]}^{}$ 
($0 \leq  j \leq i$) of $B$  as follows: 
\begin{align*}
\P_{[i,j]}
= \le\{ \begin{array}{cl}
1 \  (={\rm the \ identity \ map}) & (i=j=0)\\
D^i &  (j=0,  \ i \geq 1)\\
\rho^i &  (i =j \geq 1)\\
 \rho \Phi_{[i-1,j-1]} + D \Phi_{[i-1,j]}   & (i \geq 2, \ 1 \leq j \leq i-1)
\end{array}
\ri.
\end{align*}
First we shall state the following. 

\ss
\begin{lem}\label{lem00}
For any $\al \in B$, there holds 
\begin{align*}
\alpha X^{i} = \sum_{j=0}^{i} X^j \P_{[i,j]} (\alpha) \ \ ( i \geq 0).
\end{align*}
\end{lem}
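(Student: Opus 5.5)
We argue by induction on $i$, using only the defining relation $\alpha X = X\rho(\alpha) + D(\alpha)$ and the recursive definition of the maps $\P_{[i,j]}$. For $i=0$ the claim reads $\alpha = X^0 \P_{[0,0]}(\alpha)=\alpha$, which is immediate. For $i=1$ we have $\P_{[1,0]}=D$ and $\P_{[1,1]}=\rho$, so the claim is exactly the commutation rule $\alpha X = X\rho(\alpha)+D(\alpha)$. It is convenient to treat $i=1$ separately because the recursive clause of the definition only applies for $i\geq 2$.

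For the inductive step, assume the formula holds for some $i\geq 1$ and all $\alpha\in B$. We write $\alpha X^{i+1} = (\alpha X^i)X$ and substitute the inductive hypothesis:
\begin{align*}
\alpha X^{i+1} = \sum_{j=0}^{i} X^j \P_{[i,j]}(\alpha)\, X .
\end{align*}
Next we apply the base relation to each coefficient $\beta_j=\P_{[i,j]}(\alpha)\in B$, namely $\beta_j X = X\rho(\beta_j)+D(\beta_j)$. This gives
\begin{align*}
\alpha X^{i+1} = \sum_{j=0}^{i} X^{j+1}\rho\P_{[i,j]}(\alpha) + \sum_{j=0}^{i} X^{j} D\P_{[i,j]}(\alpha).
\end{align*}
We then shift the index in the first sum and collect the coefficients of $X^k$ for $0\le k\le i+1$.

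The bookkeeping in the collection step is the only real point to check, since it must match the piecewise definition at the boundary indices. The cases are as follows.
\begin{itemize}
\item For $k=0$, only the second sum contributes, giving $D\P_{[i,0]} = D\cdot D^i = D^{i+1} = \P_{[i+1,0]}$. This uses $\P_{[i,0]}=D^i$, which holds for $i\ge1$.
\item For $k=i+1$, only the first sum contributes, giving $\rho\P_{[i,i]} = \rho^{i+1} = \P_{[i+1,i+1]}$.
\item For $1\le k\le i$, the coefficient is $\rho\P_{[i,k-1]} + D\P_{[i,k]}$, which is precisely the recursive clause defining $\P_{[i+1,k]}$, valid since $i+1\geq 2$.
\end{itemize}
Hence $\alpha X^{i+1}=\sum_{k=0}^{i+1}X^k\P_{[i+1,k]}(\alpha)$, which completes the induction. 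No earlier results are needed beyond the defining multiplication of $B[X;\rho,D]$ and associativity.
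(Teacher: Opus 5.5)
Your proof is correct and follows the paper's argument essentially step for step: induction on $i$, writing $\alpha X^{i+1}=(\alpha X^i)X$, commuting each coefficient $\P_{[i,j]}(\alpha)$ past $X$, shifting the index, and matching the boundary coefficients ($k=0$, $k=i+1$) and the interior ones against the piecewise definition of $\P_{[i+1,k]}$. The only difference is cosmetic. You verify $i=1$ as a separate base case, whereas the paper runs the inductive step from $i=0$; this works because $\P_{[0,0]}=1=D^0=\rho^0$ and the interior range $1\le k\le i$ is then empty.
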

\begin{proof}
We shall show it by induction. 
It is true when $i=0$. 
Let $\alpha$ be arbitrary element  in $B$ and assume that 
it is true when $i \geq 0$. 
We have then   
\begin{align*}
\alpha X^{i+1} &= \alpha X^i \cdot X\\
&= \left( \sum_{j=0}^{i} X^j \P_{[i,j]} (\alpha)\right) X\\
&= \sum_{j=0}^{i} X^j \left( X \rho \P_{[i,j]} (\alpha) + D \P_{[i,j]} (\alpha) \right)\\
&= \sum_{j=0}^{i} X^{j+1} \rho \P_{[i,j]} (\alpha) 
+ \sum_{j=0}^{i} X^{j} D \P_{[i,j]} (\alpha)\\
&=\sum_{j=1}^{i+1} X^{j} \rho \P_{[i,j-1]}(\alpha) 
+ \sum_{j=0}^{i} X^{j} D \P_{[i,j]}(\alpha)\\
&= X^{i+1} \rho \P_{[i,i]} (\alpha) + 
 \sum_{j=1}^{i} X^{j}  \left( \rho \P_{[i,j-1]} + D \P_{[i,j]} \right)(\alpha)
 + D \P_{[i,0]} (\alpha)\\
&= X^{i+1} \P_{[i+1,i+1]} (\alpha) + 
 \sum_{j=1}^{i} X^{j}  \P_{[i+1,j]}(\alpha)
 + \P_{[i+1,0]} (\alpha)\\
 &= \sum_{j=0}^{i+1} X^{j}  \P_{[i+1,j]} (\alpha).
\end{align*}
This completes the proof. 
\end{proof}


\ss
\begin{lem}\label{lemc1} 
Let $f$ be a monic polynomial in $B[X;\rho,D]$ of the form 
 $f=\sum_{i=0}^m X^i a_i$ $( m \geq 1, a_m=1)$.  
Then $f$ is in  $B[X;\rho,D]_{(0)}$ 
if and only if 
\begin{enumerate}
\item $\ds{ a_j \rho^m(\alpha)= \sum_{i=j}^{m} \P_{[i,j]} (\alpha) a_{i}}$
  for any $\al \in B$   $( 0 \leq j \leq m-1)$. 
\item $\ds 
D(a_i)= \le\{ \begin{array}{cl}
a_{i-1}-\rho({a_{i-1}})+a_i \big( \rho(a_{m-1})-a_{m-1} \big)  & (1 \leq i \leq m-1)\\
a_0 \big( \rho(a_{m-1})-a_{m-1} \big)  &  (i=0)
\end{array}\ri.$.
\end{enumerate}
\end{lem}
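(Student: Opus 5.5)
The plan is to make the condition $fR=Rf$ (with $R=B[X;\rho,D]$) concrete through two identities: one for $\alpha f$ with $\alpha\in B$, and one for $Xf$. Since $R$ is generated as a ring by $B$ and $X$, and $f$ is monic, this should reduce to coefficient comparisons done with Lemma \ref{lem00}.

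First, I expand $\alpha f$ for $\alpha\in B$. By Lemma \ref{lem00},
\begin{align*}
\alpha f=\sum_{i=0}^m \alpha X^i a_i=\sum_{j=0}^m X^j\Big(\sum_{i=j}^m \P_{[i,j]}(\alpha)a_i\Big).
\end{align*}
Its coefficient of $X^m$ is $\P_{[m,m]}(\alpha)=\rho^m(\alpha)$. Comparing with $f\rho^m(\alpha)=\sum_j X^j a_j\rho^m(\alpha)$ shows that condition 1 is exactly equivalent to $\alpha f=f\rho^m(\alpha)$ for all $\alpha\in B$.

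Second, I compute $fX+fb$ for an element $b\in B$ to be determined. Using $a_iX=X\rho(a_i)+D(a_i)$ and $a_m=1$, $D(1)=0$, I get
\begin{align*}
fX+fb=X^{m+1}+\sum_{i=1}^{m}X^{i}\big(\rho(a_{i-1})+D(a_i)+a_ib\big)+\big(D(a_0)+a_0b\big).
\end{align*}
I compare this with $Xf=\sum_{i=0}^{m} X^{i+1}a_i$. The $X^m$ coefficient forces $b=a_{m-1}-\rho(a_{m-1})$. With this $b$, the $X^i$ coefficients ($1\le i\le m-1$) and the constant term give precisely condition 2. Hence condition 2 is equivalent to $Xf=f(X+b)$ with $b=a_{m-1}-\rho(a_{m-1})$.

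For sufficiency, the two identities give $Bf\subseteq fR$ and $Xf\subseteq fR$, hence $Rf\subseteq fR$. For the reverse inclusion I use that $\rho$ is an automorphism. Setting $\alpha=\rho^{-m}(\beta)$ gives $f\beta=\rho^{-m}(\beta)f\in Rf$. Then $fX=Xf-fb=Xf-\rho^{-m}(b)f\in Rf$. So $fR\subseteq Rf$, and therefore $f\in B[X;\rho,D]_{(0)}$.

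For necessity, assume $fR=Rf$, so $\alpha f=fg$ and $Xf=fh$ for some $g,h\in R$. The key step, and the only delicate point, is a degree argument. Because $f$ is monic, for any $g$ with leading term $X^kc$, the product $fg$ has leading term $X^{m+k}c$: the terms $X^ia_i\cdot X^kc$ with $i<m$ only contribute to lower degrees, by Lemma \ref{lem00}. Since $\deg(\alpha f)\le m$, this forces $g\in B$, and the $X^m$ coefficient gives $g=\rho^m(\alpha)$; the first comparison then yields condition 1. Similarly, $Xf$ is monic of degree $m+1$, so $h=X+b'$ with $b'\in B$. The second comparison forces $b'=a_{m-1}-\rho(a_{m-1})$ and yields condition 2.
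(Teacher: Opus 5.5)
Your proposal is correct and follows essentially the same route as the paper: you expand $\alpha f$ with Lemma \ref{lem00} and compare it with $f\rho^m(\alpha)$ to get condition 1, then compare $Xf$ with $f(X+b)$, $b=a_{m-1}-\rho(a_{m-1})$, coefficient by coefficient to get condition 2. The one difference is that the paper takes the reduction of $fR=Rf$ to these two identities from Ikehata's Lemma 1.1 without proof, whereas you prove it directly (closure under products for $Rf\subseteq fR$, the inverse $\rho^{-m}$ for $fR\subseteq Rf$, and the monic-degree argument for necessity), which makes your version self-contained.
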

\begin{proof} 
Let  $f=\sum_{i=0}^m X^i a_i$ $(m \geq 1, a_m=1)$ be in  $B[X;\rho,D]$ and 
$\alpha$   arbitrary element in $B$. 
As was shown in  \cite[Lemma 1.1]{I1}, 
$f$ is in $B[X;\rho,D]_{(0)}$ if and only if $\alpha f = f \rho^m(\alpha)$ 
and $Xf = f(X- \big(\rho(a_{m-1}) -a_{m-1}) \big)$.  
It follows from Lemma \ref{lem00} that 
\begin{align*}
\alpha f =
\sum_{i=0}^{m} \alpha X^i a_i 
=\sum_{i=0}^m \left( \sum_{j=0}^{i}  X^j \Phi_{[i,j]} (\alpha) \right) a_i =\sum_{j=0}^m X^j \left(  \sum_{i=j}^m \Phi_{[i,j]} (\alpha) a_i  \right).
\end{align*}
Noting that $f \rho^{m}(\alpha) = \sum_{j=0}^{m-1} X^j a_j \rho^{m}(\alpha)$, 
the equation  
$\alpha f =f \rho^m(\alpha)$ implies the condition (1), and conversely. 
Next we see that 
\begin{align*}
&f \Big( X- \big( \rho(a_{m-1})-a_{m-1} \big) \Big)\\
&=  \sum_{i=0}^{m} X^{i} a_i X - \sum_{i=0}^{m}X^i a_i \big( \rho(a_{m-1})-a_{m-1}\big)\\
&= \sum_{i=0}^{m} X^{i} \big( X\rho(a_i) +D(a_i) \big)
- \sum_{i=0}^{m}X^i a_i \big( \rho(a_{m-1})-a_{m-1} \big) \\
&= \sum_{i=0}^{m} X^{i+1} \rho(a_i)  + \sum_{i=0}^{m} X^{i} D(a_i)
- \sum_{i=0}^{m}X^i a_i \big( \rho(a_{m-1})-a_{m-1} \big) \\
&= 
\sum_{i=1}^{m+1} X^i \rho (a_{i-1}) + 
\sum_{i=0}^{m} X^i \Big( D(a_i) -a_i \big( \rho(a_{m-1})-a_{m-1} \big)   \Big)\\
&=   X^{m+1} + X^m a_{m-1} + \sum_{i=1}^{m-1} X^i \Big( \rho(a_{i-1}) + D(a_i) -a_i \big( \rho(a_{m-1})-a_{m-1} \big)   \Big)\\
& \ \ \ \, + D(a_0) -a_0 \big( \rho(a_{m-1})-a_{m-1} \big). 
\end{align*}
Noting that $Xf = \sum_{i=0}^{m}X^{i+1} a_i = X^{m+1} + X^m a_{m-1} + \sum_{i=1}^{m-1} X^i a_{i-1}$, 
the  equation $Xf = f(X- \big(\rho(a_{m-1}) -a_{m-1}) \big)$ implies  that 
\[
\le\{ \begin{array}{l}
a_{i-1} = \rho(a_{i-1}) + D(a_i) -a_i \big( \rho(a_{m-1})-a_{m-1} \big) 
 \ \ (1 \leq i \leq m-1) \\
0= D(a_0) -a_0 \big( \rho(a_{m-1})-a_{m-1} \big) 
\end{array} \ri. .
\]
Hence we have the condition (2). The converse is obvious. 
\end{proof}
\ss

Recall that $B^\rho =\{ \a \in B \, | \, \rho(\a) = \a\}$. 
In addition, let  
$B^D = \{ \a \in B \, | \, D(\a) = 0\}$, 
$B^{\rho,D} = B^\rho \cap B^D$,  
and $C(B^{\rho,D})$ the center of $B^{\rho,D}$. 
We have then the following. 

\ss
\begin{cor}\label{cor01}
Let $f=\sum_{i=0}^m X^i a_i$ $(m \geq 1, a_m=1)$ be in $B[X;\rho,D]_{(0)}$. 
If $f \in B^\rho[X]$ then $f \in C(B^{\rho,D})[X]$.  
\end{cor}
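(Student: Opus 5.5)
We must show each $a_i$ lies in $B^\rho$, in $B^D$, and commutes with every element of $B^{\rho,D}$. The first is the hypothesis $f\in B^\rho[X]$; the other two will come from the two conditions of Lemma~\ref{lemc1}.

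For $D(a_i)=0$ we use condition (2). Since $\rho(a_{m-1})=a_{m-1}$, the correction term $a_i\big(\rho(a_{m-1})-a_{m-1}\big)$ vanishes for every $i$. Likewise $a_{i-1}-\rho(a_{i-1})=0$ because $a_{i-1}\in B^\rho$. Condition (2) therefore reduces to $D(a_i)=0$ for $0\le i\le m-1$. For $a_m=1$ we have $D(1)=D(1\cdot 1)=D(1)\rho(1)+D(1)$, so $D(1)=0$. Hence every $a_i\in B^{\rho,D}$.

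For centrality we use condition (1) with $\alpha\in B^{\rho,D}$. For such $\alpha$, $D\alpha=0$ and $\rho\alpha=\alpha$.
\begin{itemize}
\item We claim $\P_{[i,j]}(\alpha)=0$ for $j<i$ and $\P_{[i,i]}(\alpha)=\alpha$.
\item This follows by induction on $i$ from the recursion. We have $\P_{[i,0]}(\alpha)=D^i(\alpha)=0$ for $i\ge1$, and $\P_{[i,i]}(\alpha)=\rho^i(\alpha)=\alpha$.
\item For $1\le j\le i-1$, $\P_{[i,j]}(\alpha)=\rho\P_{[i-1,j-1]}(\alpha)+D\P_{[i-1,j]}(\alpha)$. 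The first term is $0$ when $j-1<i-1$. The second is $D(\alpha)=0$ or $D(0)=0$.
\end{itemize}
Condition (1) then reads $a_j\rho^m(\alpha)=\alpha a_j$, i.e.\ $a_j\alpha=\alpha a_j$ for $0\le j\le m-1$, and trivially for $a_m=1$. Thus each $a_i$ commutes with all of $B^{\rho,D}$ and lies in $B^{\rho,D}$, so $a_i\in C(B^{\rho,D})$.

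The only mildly delicate step is the inductive evaluation of $\P_{[i,j]}$ on $B^{\rho,D}$, i.e.\ checking the index bookkeeping in the recursion. The rest is direct substitution into Lemma~\ref{lemc1}.
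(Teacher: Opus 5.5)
Your proof is correct and follows the paper's argument. Condition (2) of Lemma~\ref{lemc1} collapses to $D(a_i)=0$ once $\rho(a_i)=a_i$, and condition (1) with $\beta\in B^{\rho,D}$ collapses to $a_j\beta=\beta a_j$ because $\Phi_{[i,j]}(\beta)$ equals $\beta$ for $i=j$ and $0$ for $i>j$; your inductive check of that fact and your remark that $D(1)=0$ just fill in details the paper treats as clear.
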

\begin{proof}
Let $f=\sum_{i=0}^m X^i a_i$ $(m \geq 1, a_m=1)$ be in 
$B[X;\rho,D]_{(0)}$ and assume that $f \in B^\rho[X]$. 
Then, by Lemma \ref{lemc1} (2), 
 we have  
\begin{align*}
D(a_i)&=a_{i-1}-\rho({a_{i-1}})+a_i \big( \rho(a_{m-1})-a_{m-1} \big)\\
&=a_{i-1}-{a_{i-1}}+a_i \big( a_{m-1}-a_{m-1} \big)\\
&=0 \  \ \ (1 \leq i \leq m-1), \\
D(a_0) &= a_0 \big( \rho(a_{m-1})-a_{m-1} \big)\\ 
&=a_0 \big( a_{m-1}-a_{m-1} \big)\\
&=0.
\end{align*} 
Hence $a_i \in B^{D}$, that is, $a_i \in B^{\rho,D}$ ($0 \leq i \leq m-1$).  
Let $\beta$ be arbitrary element in $B^{\rho,D}$. 
It is clear  that $\P_{[i,j]}(\beta) =
\begin{cases}
\beta & (i=j )\\
0 &  ( i > j)
\end{cases} $. 
Therefore it follows from Lemma \ref{lemc1} (1) that 
\[
 a_j \beta = a_j \rho^m(\beta)
=\sum_{i=j}^{m} \P_{[i,j]} (\beta) a_{i} =  \beta a_j  \ \ (0 \leq j \leq m-1). 
\] 
Thus  $a_j \in C(B^{\rho,D})$ $(0 \leq j \leq m-1)$.  
\end{proof}

\ss
\section{Weakly separable polynomials in $B[X;\rho,D]$}

The conventions and notations employed in the preceding section 
will be used in this section. 
Throughout this section,  let $R=B[X;\rho,D]$, 
$R_{(0)}=B[X;\rho,D]_{(0)}$, and  $f$ a monic polynomial in $R_{(0)} \cap B^\rho[X]$ 
of the form  $f=\sum_{i=0}^m X^i a_i$ ($m \geq 1$, $a_m =1$).   
Note that  $f$ is in $C(B^{\rho,D})[X]$ by Corollary \ref{cor01}.  
We shall use the following conventions: 
\begin{itemize}
\item $R_1 
= \{ g \in R \, | \, \alpha  g = g \rho(\alpha)  \  
( \forall  \alpha \in B )\}$ 
\item $A = R/fR$ (the quotient ring of $R$ modulo $fR$)
\item $x = X +f R \in A$ (i.e. $\{ 1,x, x^2, \cdots , x^{m-1} \}$ is a free $B$-basis of $A$ and $x^m =-\sum_{j=0}^{m-1} x^j a_j$) 
\item $I_x $ is an inner derivation of $A$ by $x$ (i.e. $I_x(z) = zx-xz$ ($\forall z \in A$))
\item $C(A) $ is the center of $A$
\item $A_{k}
=\{ u \in A\, | \, \alpha u = u \rho^k (\alpha)  \  
(\forall \alpha \in B)\} $ \ ($k \in \mathbb{Z}$)
\item $V= A_0 = \{ z \in A \, | \, \a z = z \a \ (\forall \a \in B) \} $ 
(i.e. $V$ is the centralizer of $B$ in $A$)
\end{itemize}
Moreover, we define 
polynomials 
$Y_j \in R \cap  C(B^{\rho,D})[X]$ ($0 \leq j \leq m-1$) as follows: 
\begin{align*}
Y_0 &= X^{m-1} + X^{m-2}a_{m-1} + \cdots + Xa_2 + a_1,\\
Y_1 &= X^{m-2} + X^{m-3}a_{m-1} + \cdots + Xa_3 + a_2,\\
\cdot  &\cdot  \cdot  \cdot  \cdot \\
Y_j &= X^{m-j-1} + X^{m-j-2}a_{m-1} + \cdots + Xa_{j+2} + a_{j+1} \left(=\sum_{k=j}^{m-1} X^{k-j} a_{k+1} \right),\\
\cdot  &\cdot  \cdot  \cdot  \cdot \\
Y_{m-2} &= X + a_{m-1},\\
Y_{m-1} &= 1.
\end{align*} 
%
It is obvious that 
\begin{align}\label{eqXYj}
XY_j = 
\le\{\begin{array}{cl}
Y_{j-1}-a_j & (1 \leq j \leq m-1)\\
 f-a_0 & (j=0)
\end{array} \ri. .
\end{align}
The polynomials $Y_j$ ($0 \leq j \leq m-1$) 
were introduced by Y. Miyashita  to characterize  separable polynomials 
in $B[X;\rho,D]$ (cf. \cite{M}). 
Now let $y_j = Y_j + fR \in A$ ($0 \leq j \leq m-1$). 
Since the equality (\ref{eqXYj}), the following lemma is obvious.

\ss
\begin{lem}\label{lemxyj}
\[
xy_j = 
\le\{\begin{array}{cl}
y_{j-1}-a_j & (1 \leq j \leq m-1)\\
 -a_0 & (j=0)
 \end{array}\ri..
\]
\end{lem}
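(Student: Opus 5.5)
The plan is to push the polynomial identity (\ref{eqXYj}) in $R$ down to the quotient $A=R/fR$ through the canonical projection $\pi: R \to A$, $g \mapsto g+fR$. Since $\pi$ is a ring homomorphism, it preserves products and differences. It also satisfies $\pi(X)=x$ and $\pi(Y_j)=y_j$ by definition, and $\pi(\alpha)=\alpha$ for $\alpha\in B$, because $B$ embeds into $A$ via the free $B$-basis $\{1,x,\dots,x^{m-1}\}$.

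The first step is to verify (\ref{eqXYj}) itself, which the paper calls obvious; I would check it anyway since everything rests on it. Write $Y_j=\sum_{k=j}^{m-1}X^{k-j}a_{k+1}$. Left multiplication by $X$ only shifts exponents and never moves $X$ past a coefficient, so no $\rho$ or $D$ terms appear:
\[
XY_j=\sum_{k=j}^{m-1}X^{k-j+1}a_{k+1}=\sum_{l=j-1}^{m-1}X^{l-(j-1)}a_{l+1}-a_j .
\]
For $1\le j\le m-1$, the sum on the right is exactly $Y_{j-1}$, so $XY_j=Y_{j-1}-a_j$. For $j=0$, the sum runs over $l=-1,\dots,m-1$ and equals $\sum_{i=0}^{m}X^{i}a_i=f$, so $XY_0=f-a_0$.

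The second step is to apply $\pi$ to both cases. For $1\le j\le m-1$ this gives $xy_j=y_{j-1}-a_j$ directly. For $j=0$ it gives $xy_0=\pi(f)-a_0=-a_0$, because $f\in fR$. This uses that $fR$ is a two-sided ideal, which holds since $f\in R_{(0)}$ means $fR=Rf$; that is precisely what makes $A$ a ring and $\pi$ multiplicative.

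There is no real obstacle here. The only point needing care is the index bookkeeping in the shifted sum, and that is routine.
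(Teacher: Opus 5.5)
Your proposal is correct and follows the paper's own argument: the paper states the lemma is immediate from the polynomial identity (\ref{eqXYj}) upon passing to $A=R/fR$, where $f\mapsto 0$. Your explicit check of (\ref{eqXYj}) by the index shift, and your remark that $fR=Rf$ makes the projection multiplicative, are accurate additions to what the paper leaves implicit.
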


\ss
So we define a map $\tau : A \to A$ by
\[
\tau \left( z \right) = 
\sum_{j=0}^{m-1} y_j z x^j  \ \ (z \in A).
\]
Obviously, $\tau$ is a $C(A)$-$C(A)$-endomorphism of $A$. 
By making use of $\tau$, separable polynomials in $R$ are characterized 
as follows:

\ss
\begin{lem}\label{propM} {\rm (\cite[Theorem 1.8]{M} or \cite[Theorem 1.3]{YI1}) } 
$f$ is  separable  in $R$ if and only if 
there exists $u \in A_{1-m}$ $($that is, $\rho^{m-1}(\alpha) u = u \alpha$ for any $\alpha \in B)$ such that 
 $\tau(u) = 1$. 
\end{lem}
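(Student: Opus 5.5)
We prove Lemma \ref{propM} by exhibiting an explicit Casimir-type element. The plan is to show that the separability idempotent must have the form $\sum_j y_j \otimes x^j$ twisted by a suitable coefficient. Throughout, $A$ is free over $B$ with basis $1,x,\dots,x^{m-1}$, so $A\otimes_B A$ is a free left $A$-module with basis $\{1\otimes x^j\}_{0\le j\le m-1}$.

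The first step is to show that $E=\sum_{j=0}^{m-1} y_j\otimes x^j\in A\otimes_BA$ satisfies $xE=Ex$. Using Lemma \ref{lemxyj} we compute
\[
xE=\sum_{j=1}^{m-1}(y_{j-1}-a_j)\otimes x^j - a_0\otimes 1 .
\]
We also have
\[
Ex=\sum_{j=0}^{m-2} y_j\otimes x^{j+1}+1\otimes x^m ,
\]
and here $1\otimes x^m=-\sum_j 1\otimes x^ja_j=-\sum_j a_j\otimes x^j$, because the $a_j$ lie in $C(B^{\rho,D})\subseteq B$ and can be moved across $\otimes_B$. The two expressions agree.

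The second step concerns $\alpha\in B$. The element $\sum_j Y_j\otimes X^j$ corresponds to $(f(X)-f(Y))/(X-Y)$. I would show that $\alpha E=E\rho^{1-m}$-twisted, in the sense that $\rho^{m-1}(\alpha)E'=E'\alpha$ for $E'$ built with $u$ inserted. Concretely, I claim that for $u\in A_{1-m}$ the element $E_u=\sum_j y_j u\otimes x^j$ commutes with all of $B$. This should follow from $\alpha f=f\rho^m(\alpha)$, i.e. Lemma \ref{lemc1}(1), together with Lemma \ref{lem00} to commute $\alpha$ past each $y_j$ and $x^j$. This identity is the main obstacle. I would first try to verify it by showing that the map $z\otimes w\mapsto\sum_j y_jz\otimes x^jw$ has the right $B$-semilinearity, comparing coefficients in the basis $1\otimes x^j$. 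If that gets messy, I would instead use the universal characterization: $\{\,\sum z_i\otimes w_i : c\sum z_i\otimes w_i=\sum z_i\otimes w_i c\ \forall c\,\}$ is identified, via the freeness above, with $\{u\}$ satisfying precisely $u\in A_{1-m}$. This is Miyashita's argument, and I would follow it.

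For sufficiency, given $u\in A_{1-m}$ with $\tau(u)=1$, the element $E_u$ is $A$-centralizing by the two steps above, since $A$ is generated by $B$ and $x$, and $E_u$ commutes with $x$ by the same computation as in the first step, using $xu=u x$-twist compatibility. Its image under multiplication is $\sum_j y_jux^j=\tau(u)=1$. So $E_u$ is a separability idempotent, and $f$ is separable.

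For necessity, let $e=\sum_j z_j\otimes x^j$ be a separability idempotent, written uniquely via the left basis. The condition $xe=ex$, expanded using $x^m=-\sum x^ja_j$, forces the recursion $z_{j-1}=xz_j+z_{m-1}a_j$-type relations. This yields $z_j=y_ju$ with $u=z_{m-1}$, and the condition $\alpha e=e\alpha$ forces $u\in A_{1-m}$. Finally, $\mu(e)=1$ gives $\tau(u)=1$.
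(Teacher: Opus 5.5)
\textbf{The gap.} Your overall strategy is the standard Casimir-element argument behind the results the paper cites; the paper itself gives no proof, only references to Miyashita and to Yamanaka--Ikehata. Your first step ($xE=Ex$) is correct, and the necessity outline is essentially right. The sufficiency direction, however, has a genuine gap. Everything rests on the claim that $E_u=\sum_j y_ju\otimes x^j$ commutes with every $\alpha\in B$ when $u\in A_{1-m}$, and you never prove it. You call it ``the main obstacle'' and offer two routes, but carry out neither. The coefficient comparison via Lemma~\ref{lem00} and Lemma~\ref{lemc1}(1) is only named, not done. The fallback ``universal characterization'' is circular: saying that the $A$-centralizing elements of $A\otimes_BA$ are exactly the $E_u$ with $u\in A_{1-m}$ already includes $E_u\in(A\otimes_BA)^A$ for every $u\in A_{1-m}$, which is the content of the ``if'' direction. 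Without it you have no separability idempotent.

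\textbf{How to close it.} You already have the ingredients. Put $\nu(\alpha)=\alpha E_u-E_u\alpha$. Using $xE_u=E_ux$ and $x\rho(\beta)=\beta x-D(\beta)$, you get
\[
x\,\nu(\rho(\beta))=\nu(\beta)x-\nu(D(\beta))\quad\text{for all }\beta\in B.
\]
Write $\nu(\beta)=\sum_j w_j(\beta)\otimes x^j$ in the left $A$-basis $\{1\otimes x^j\}$. Since $x^{m-1}\beta=\rho^{1-m}(\beta)x^{m-1}+(\text{lower terms})$ and $y_{m-1}=1$, the top coefficient is $w_{m-1}(\beta)=\beta u-u\rho^{1-m}(\beta)=0$, because $u\in A_{1-m}$. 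Comparing $x^j$-coefficients in the displayed relation gives $w_{j-1}(\beta)=xw_j(\rho(\beta))+w_j(D(\beta))$ for $1\leq j\leq m-1$. By downward induction all $w_j$ vanish, so $\nu=0$.

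\textbf{Smaller corrections.} First, $xE_u=E_ux$ needs no ``twist compatibility'' between $x$ and $u$. It only needs $ua_j=a_ju$, to rewrite $u\otimes x^m=-\sum_j a_ju\otimes x^j$, and this holds because $a_j\in B^\rho$ and $u\in A_{1-m}$. Second, in the necessity direction the recursion from $xe=ex$ gives $z_j=\sum_{k\geq j}x^{k-j}ua_{k+1}$. This equals $y_ju$ only once you know $u$ commutes with the $a_k$, so you must first extract $u\in A_{1-m}$ from the top coefficient of $\alpha e=e\alpha$, not afterwards. Third, your coordinates use that $\{x^j\}$ is a \emph{left} $B$-basis of $A$, whereas the paper records a right basis. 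This holds because $\rho$ is bijective, but you should say so.
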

\ss

\begin{rmk}
Needles to say, each $a_i$  ($0 \leq i \leq m-1$) satisfies that 
$a_i x = x a_i$ and $a_i u = u a_i$  for any $u \in A_k$ ($k \in \mathbb{Z}$). 
In particular,  we see that $y_i x = x y_i$ ($0 \leq i \leq m-1$). 
\end{rmk}
\ss

First we shall prove the following lemma concerning 
the inner derivation $I_x$ and 
the $C(A)$-$C(A)$-endomorphism $\tau$. 

\ss
\begin{lem}\label{lemT} 
\begin{enumerate}
\item $I_x (A_k) \subset  {\rm Ker}(\tau)$ 
for any integer $k$. 
\item $I_x (V) \subset  {\rm Ker}(\tau) \cap A_1$. 
\end{enumerate}
\end{lem}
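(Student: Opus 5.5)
The plan is a direct computation, using Lemma \ref{lemxyj}, the relation $x^m=-\sum_{j=0}^{m-1}x^ja_j$, and the Remark: each $a_j$ commutes with $x$ and with every element of $A_k$, and each $y_j$ commutes with $x$.

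For (1), fix $z\in A_k$ and expand
\[
\tau(I_x(z))=\sum_{j=0}^{m-1}y_jzx^{j+1}-\sum_{j=0}^{m-1}y_jxzx^j .
\]
\emph{First sum.} Shift the index to get $\sum_{j=1}^{m-1}y_{j-1}zx^j+y_{m-1}zx^m$. Since $y_{m-1}=1$, the last term is $zx^m=-\sum_{j=0}^{m-1}zx^ja_j$. Moving each $a_j$ to the front (it commutes with $x$ and with $z\in A_k$) gives $-\sum_{j=0}^{m-1}a_jzx^j$.

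\emph{Second sum.} Replace $y_jx$ by $xy_j$ and apply Lemma \ref{lemxyj}. This gives
\[
-a_0z+\sum_{j=1}^{m-1}(y_{j-1}-a_j)zx^j .
\]

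\emph{Difference.} The terms $\sum_{j=1}^{m-1}y_{j-1}zx^j$ cancel, and what remains is
\[
-\sum_{j=0}^{m-1}a_jzx^j+a_0z+\sum_{j=1}^{m-1}a_jzx^j=0 .
\]
Hence $I_x(A_k)\subset{\rm Ker}(\tau)$.

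The only delicate point is the commutation $za_j=a_jz$. This is exactly where $z\in A_k$ is used: $a_j\in C(B^{\rho,D})$ is fixed by $\rho$, so $a_jz=z\rho^k(a_j)=za_j$. The Remark already records this.

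For (2), $V=A_0$, so $I_x(V)\subset{\rm Ker}(\tau)$ follows from (1). It remains to show $I_x(v)=vx-xv\in A_1$ for $v\in V$. For $\alpha\in B$, use $\alpha x=x\rho(\alpha)+D(\alpha)$ and the fact that $v$ commutes with all of $B$:
\[
\alpha(vx-xv)=v\alpha x-\alpha xv=v\bigl(x\rho(\alpha)+D(\alpha)\bigr)-\bigl(x\rho(\alpha)+D(\alpha)\bigr)v .
\]
The $D(\alpha)$ terms cancel because $D(\alpha)\in B$ commutes with $v$. Likewise $\rho(\alpha)v=v\rho(\alpha)$, so the right-hand side equals $(vx-xv)\rho(\alpha)$. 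Thus $I_x(v)\in A_1$, which completes (2).
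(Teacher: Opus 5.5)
Your proposal is correct and follows essentially the same argument as the paper. The paper first rewrites $\tau(I_x(u))$ as $\tau(u)x-x\tau(u)$ and then checks $x\tau(u)=\tau(u)x$ using Lemma~\ref{lemxyj}, the relation for $x^m$, and $a_ju=ua_j$; you do the same computation directly on the expanded sum, and your part (2) matches the paper's verbatim.
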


\begin{proof} 
(1) Let $k$ be   arbitrary integer and $u \in A_k$.  
We obtain  then 
\begin{align*}
\tau(I_x(u)) &=\tau(ux -xu)\\
&= \sum_{j=0}^{m-1} y_j (ux -xu)x^j\\
&= \le(\sum_{j=0}^{m-1} y_j u x^j \ri)x - x \le(\sum_{j=0}^{m-1} y_j u x^j \ri)\\
&= \tau(u)x - x \tau(u). 
\end{align*}
Therefore it suffice to prove that $\tau(u) x = x \tau(u)$. 
By Lemma \ref{lemxyj}, we have 
\begin{align*}
x \tau (u) &= x \left( \sum_{j=0}^{m-1} y_j u x^j \right)\\
&= xy_0 u +  \sum_{j=1}^{m-1} xy_j u x^j\\
&= -a_0  u +  \sum_{j=1}^{m-1} (-a_j + y_{j-1} ) u x^j\\
&= -u a_0 - u \sum_{j=1}^{m-1} x^ja_j + \sum_{j=1}^{m-1} y_{j-1}u x^j \\
&= u \left( -\sum_{j=0}^{m-1} x^ja_j\right) + \sum_{j=0}^{m-2} y_{j}u x^{j+1}\\ 
&= ux^m+\left( \sum_{j=0}^{m-2} y_{j}u x^{j}\right)x\\
&= (y_{m-1} u x^{m-1}) x +\left( \sum_{j=0}^{m-2} y_{j}u x^{j}\right)x\\
&= \left(\sum_{j=0}^{m-1} y_{j}u x^{j}\right)x\\
&= \tau(u)x.
\end{align*}

(2) Since the condition (1), it suffice to show  that $I_x ( V ) \subset A_1$. 
For any $\a \in B$ and $u \in V$, we obtain 
\begin{align*}\alpha I_x(u) 
&= \alpha (ux-xu) \\
&= u \alpha x -\alpha x u \\
&=u(x\rho(\alpha) +D(\alpha)) -(x\rho(\alpha ) + D(\alpha ) )u \\
&=ux\rho(\alpha) +uD(\alpha) -xu\rho(\alpha ) - uD(\alpha )\\
&=(ux-xu) \rho(\alpha)\\
& =I_x(u)\rho(\alpha).
\end{align*} 
Thus  $I_x(V) \subset A_1$. 
\end{proof}


\ss
To show the subsequent lemma (Lemma \ref{lemDA}),  we need the following 
two  lemmas. 

\ss
\begin{lem}\label{lemR1}
Let $g_1$ be arbitrary element in $R$.  
We define $g_0 = 0$ and 
$g_{j+1} = g_j X + X^j g_1$ $(j \geq 1)$, inductively. 
\begin{enumerate}
\item $g_{i+k} = g_i X^k + X^i g_k$  $(i,k \geq 0)$. 
\item If $g_1 \in R_1$ then 
$\alpha g_{j} = \sum_{k=1}^j g_k \Phi_{[j,k]}(\alpha)$   $(j \geq 1)$
for any $\alpha \in B$.  
\end{enumerate}
\end{lem}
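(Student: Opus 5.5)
For (1) I would induct on $k$ with $i$ fixed. For $k=0$ the claim reads $g_i=g_iX^0+X^ig_0$, which holds because $g_0=0$. The recursion $g_{j+1}=g_jX+X^jg_1$ also holds for $j=0$, since it then says $g_1=0\cdot X+g_1$. So the case $k=1$ is exactly the defining recursion with $j=i$, for every $i\geq 0$.

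For the inductive step, assume $g_{i+k}=g_iX^k+X^ig_k$. Applying the recursion with $j=i+k$ gives
\begin{align*}
g_{i+k+1}&=g_{i+k}X+X^{i+k}g_1=(g_iX^k+X^ig_k)X+X^{i+k}g_1\\
&=g_iX^{k+1}+X^i\bigl(g_kX+X^kg_1\bigr)=g_iX^{k+1}+X^ig_{k+1},
\end{align*}
where the last step is the recursion again with $j=k$. This finishes (1).

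For (2) I would induct on $j$. The base case $j=1$ is the hypothesis $g_1\in R_1$: $\alpha g_1=g_1\rho(\alpha)=g_1\P_{[1,1]}(\alpha)$.

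Assume the formula holds for $j$ and write $\alpha g_{j+1}=(\alpha g_j)X+(\alpha X^j)g_1$. I would expand the two terms separately.

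\emph{First term.} By the induction hypothesis and $\beta X=X\rho(\beta)+D(\beta)$,
\[
(\alpha g_j)X=\sum_{k=1}^j g_kX\,\rho\P_{[j,k]}(\alpha)+\sum_{k=1}^j g_k\,D\P_{[j,k]}(\alpha).
\]

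\emph{Second term.} By Lemma \ref{lem00} and $g_1\in R_1$,
\[
(\alpha X^j)g_1=\sum_{l=0}^jX^l\,\P_{[j,l]}(\alpha)\,g_1=\sum_{l=0}^jX^lg_1\,\rho\P_{[j,l]}(\alpha).
\]

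\emph{Combining.} For each $1\leq k\leq j$, the terms $g_kX\,\rho\P_{[j,k]}(\alpha)$ and $X^kg_1\,\rho\P_{[j,k]}(\alpha)$ add up to $(g_kX+X^kg_1)\rho\P_{[j,k]}(\alpha)=g_{k+1}\rho\P_{[j,k]}(\alpha)$. The only unpaired term is the $l=0$ term $g_1\rho\P_{[j,0]}(\alpha)$. After reindexing, the coefficient of $g_k$ is:
\begin{itemize}
\item $\rho\P_{[j,j]}=\rho^{j+1}=\P_{[j+1,j+1]}$ for $k=j+1$;
\item $\rho\P_{[j,k-1]}+D\P_{[j,k]}=\P_{[j+1,k]}$ for $1\leq k\leq j$, by the defining recursion (valid since $j+1\geq 2$).
\end{itemize}
The case $k=1$ uses the unpaired $l=0$ term, which is precisely why $\P_{[j,0]}=D^j$ appears. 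This yields $\alpha g_{j+1}=\sum_{k=1}^{j+1}g_k\P_{[j+1,k]}(\alpha)$.

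The only delicate point is this index bookkeeping at the ends $k=1$ and $k=j+1$, and checking that the case distinctions in the definition of $\P_{[i,j]}$ match those ends. I expect no real obstacle beyond that.
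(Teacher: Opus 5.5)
Your proof is correct and follows essentially the same route as the paper. Part (1) is by induction on $k$. In part (2) you use the same expansion $\alpha g_{j+1}=(\alpha g_j)X+(\alpha X^j)g_1$ via Lemma \ref{lem00} and $g_1\in R_1$, then regroup with the recursions for $g_{k+1}$ and $\P_{[j+1,k]}$. The only cosmetic differences are these: the paper absorbs your ``unpaired'' $l=0$ term by writing it as $(g_0X+X^0g_1)\rho\P_{[j,0]}(\alpha)$ with $g_0=0$, and your explicit remark that the recursion also holds trivially for $j=0$ justifies a step the paper uses silently.
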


\begin{proof}
(1) Fix $i \geq 0$ and  we shall show it by induction for $k$. 
It is true when $k=0$. 
Assume that 
it is true when $k \geq 1$. 
So we obtain  
\begin{align*}
g_{i+k+1} &= g_{i+k} X + X^{i+k} g_1\\
&= \le(g_i  X^k + X^i g_k \ri) X + X^{i+k} g_1\\
&=g_i X^{k+1} + X^i g_k X + X^{i+k} g_1\\
&= g_i X^{k+1} + X^i \le( g_k X + X^k g_1 \ri)\\
&= g_i X^{k+1} + X^i g_{k+1}.
 \end{align*}
This completes the proof. 

(2) Let $g_1$ be  arbitrary element in $R_1$. 
We shall show it by  induction. 
It is true when $j=1$. 
Assume that 
it is true when  $j \geq 1$. 
Then, by Lemma  \ref{lem00}, we have 
\begin{align*}
\al g_{j+1} &= \al \le( g_j X + X^j g_1 \ri)\\
&= \al g_j X + \al  X^j g_1\\
&= \sum_{k=1}^j  g_k \P_{[j,k]} (\al) X 
+ \sum_{k=0}^j X^k  \P_{[j,k]} (\al) g_1\\
&= \sum_{k=1}^j  g_k \le( X  \rho \P_{[j,k]} (\al ) +  D \P_{[j,k]} (\al ) \ri)
  + \sum_{k=0}^j X^k g_1  \rho \P_{[j,k]} (\al )\\
&= \sum_{k=1}^j g_k X \rho \P_{[j,k]} (\al ) 
 +  \sum_{k=1}^j  g_k D \P_{[j,k]} (\al )
+ \sum_{k=0}^j X^k  g_1  \rho \P_{[j,k]} (\al )\\
&= \sum_{k=0}^j \le( g_k   X + X^k g_1 \ri)  \rho \P_{[j,k]} (\al )  + \sum_{k=1}^j  g_k  D \P_{[j,k]} (\al )\\
&= \sum_{k=0}^j g_{k+1}  \rho \P_{[j,k]} (\al ) 
+ \sum_{k=1}^j  g_k   D \P_{[j,k]} (\al )\\
&= \sum_{k=1}^{j+1} g_k   \rho \P_{[j,k-1]} (\al ) 
 + \sum_{k=1}^j  g_k    D \P_{[j,k]} (\al )\\
&= g_{j+1} \rho \P_{[j,j]} (\al ) 
  + \sum_{k=1}^{j}g_k  \le(  \rho \P_{[j,k-1]} + D \P_{[j,k]} \ri) (\al) \\
&= g_{j+1} \P_{[j+1,j+1]} (\al ) + 
\sum_{k=1}^{j} g_k \P_{[j+1,k]} (\al) \\
&= \sum_{k=1}^{j+1} g_k \P_{[j+1,k]} (\al).
\end{align*}
This completes the proof. 
\end{proof}

\ss
\begin{lem}\label{lemDR}
For any $g \in R_1$,   
there exists a $B$-derivation $\De$ of $R$ such that 
$\De(X) =g$.
\end{lem}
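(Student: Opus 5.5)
The natural candidate is the map $\De : R \to R$ defined on the free right $B$-basis $\{X^j\}_{j\ge 0}$ by
\[
\De\Big(\sum_j X^j b_j\Big) = \sum_j g_j b_j ,
\]
where $g_1 = g$, $g_0 = 0$, and $g_{j+1} = g_j X + X^j g_1$ as in Lemma \ref{lemR1}. This is well defined and additive because $R$ is a free right $B$-module on the powers of $X$. It is right $B$-linear by construction, so in particular $\De(\beta)=\De(X^0\beta)=g_0\beta=0$ for $\beta\in B$, and $\De(X)=g_1=g$. It remains to check the Leibniz rule $\De(zw)=\De(z)w+z\De(w)$ for all $z,w\in R$.

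Because $\De$ is additive and right $B$-linear, it suffices to verify the Leibniz rule for $z = X^i\alpha$ and $w = X^k\beta$ with $\alpha,\beta\in B$. I would split this into three ingredients.

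First, $\De(X^iX^k)=\De(X^i)X^k+X^i\De(X^k)$, which is exactly Lemma \ref{lemR1}(1): $g_{i+k}=g_iX^k+X^ig_k$.

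Second, I need $\De(\alpha X^k)=\alpha\De(X^k)$, that is, $\De$ commutes with left multiplication by elements of $B$ on the powers of $X$. By Lemma \ref{lem00},
\[
\alpha X^k=\sum_{j=0}^{k}X^j\P_{[k,j]}(\alpha),
\]
so
\[
\De(\alpha X^k)=\sum_{j=1}^{k}g_j\P_{[k,j]}(\alpha),
\]
the $j=0$ term vanishing because $g_0=0$. Since $g\in R_1$, Lemma \ref{lemR1}(2) says this sum equals $\alpha g_k=\alpha\De(X^k)$. Combined with right $B$-linearity, this shows $\De$ is left $B$-linear as well, so $\De(\alpha z)=\alpha\De(z)$ for all $z\in R$.

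Third, I assemble the general case. Write $X^i\alpha X^k\beta = X^i(\alpha X^k)\beta$ and expand $\alpha X^k=\sum_j X^j\P_{[k,j]}(\alpha)$. Then
\[
\De\big(X^i X^j\P_{[k,j]}(\alpha)\big)=\big(g_iX^j+X^ig_j\big)\P_{[k,j]}(\alpha),
\]
and summing over $j$ gives $g_i\,\alpha X^k+X^i\De(\alpha X^k)$. By the second ingredient, $X^i\De(\alpha X^k)=X^i\alpha g_k$. Multiplying on the right by $\beta$ yields
\[
\De(zw)=g_i\alpha X^k\beta + X^i\alpha\, g_k\beta=\De(z)w+z\De(w),
\]
as required.

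The main obstacle is the left $B$-linearity in the second ingredient, since it is the only place the hypothesis $g\in R_1$ enters. Lemma \ref{lemR1}(2) appears to handle it directly, with the $k=0$ term dropping out because $g_0=0$. Everything else reduces to bookkeeping with the basis $\{X^j\}$ and Lemma \ref{lemR1}(1).
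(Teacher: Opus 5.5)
Your proposal is correct and follows essentially the same route as the paper: define $\De$ as the right $B$-linear map with $\De(X^j)=g_j$, then verify the Leibniz rule on products $X^i\alpha\cdot X^k\beta$ by expanding $\alpha X^k$ via Lemma \ref{lem00} and applying Lemma \ref{lemR1}(1) and (2). Splitting out left $B$-linearity as a separate step is only a repackaging of the same computation, and your argument also covers the boundary cases $i=0$ or $k=0$, which the paper's restriction to $i,j\geq 1$ leaves implicit.
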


\begin{proof}
Let $g$ be  arbitrary element in $R_1$. 
We define $g_0=0$, $g_1=g$, and 
$g_{j+1} = g_{j} X + X^j g_1 $ ($j \geq 1$), inductively.
Moreover,  let $\De$ be  a right $B$-endomorphism of $R$  
 defined by $\De(X^j) = g_j$ ($j \geq 0$) 
(that is, 
$\De \le( \sum_j X^j c_j \ri) =\sum_j g_j c_j$ ($c_j \in B , j \geq 0$)). 
For any  $i,j \geq 1$ and $\alpha,\beta \in B$,   
it follows from Lemma \ref{lem00} and  Lemma \ref{lemR1} that   
\begin{align*}
\Delta(X^i \alpha X^j \beta) 
&= \Delta \left( X^i  \left( \sum_{k=0}^j X^k  \P_{[j,k]} (\alpha) \right) \beta \right)\\
&=  \Delta \left( \sum_{k=0}^j X^{i+k} \P_{[j,k]} (\alpha) \beta  \right)\\
&= \sum_{k=0}^j  g_{i+k}  \P_{[j,k]} (\alpha) \beta\\
&= \sum_{k=0}^j  \le( g_i  X^k  + X^i g_k  \ri) \P_{[j,k]} (\al) \be\\
&= g_i   \sum_{k=0}^j X^k \P_{[j,k]} (\al) \be 
+ X^i \sum_{k=1}^j g_k \P_{[j,k]} (\al) \be \\
&= g_i  \al X^j \be + X^i \al g_j  \beta\\
&= \De(X^i \al) X^j \be + X^i \al \De(X^j \be). 
\end{align*}
This implies that $\De( h_1 h_2 ) = \De(h_1) h_2 + h_1 \De(h_2)$ for any 
$h_1, h_2 \in R$, that is, $\De$ is a derivation of $R$. 
\end{proof}
\ss




Now we shall characterize $B$-derivations of $A$ as follows: 
\ss
\begin{lem}\label{lemDA}
If $\delta$ is a $B$-derivation of $A$ then 
$\delta(x) \in A_1 \cap {\rm Ker}  (\tau)$. 
Conversely, if $u \in A_1 \cap {\rm Ker}(\tau)$  
then there exists a $B$-derivation $\delta$ of $A$ such that $\delta(x) = u$.  
\end{lem}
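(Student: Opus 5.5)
For the first assertion, let $\delta$ be a $B$-derivation of $A$ and put $u=\delta(x)$. To see $u\in A_1$, I will apply $\delta$ to the relation $\alpha x = x\rho(\alpha)+D(\alpha)$, using $\delta(B)=\{0\}$. This gives $\alpha u = u\rho(\alpha)$ for every $\alpha\in B$, so $u\in A_1$.

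To see $\tau(u)=0$, I will apply $\delta$ to the defining relation $f(x)=0$, i.e. to $\sum_{i=0}^{m} x^i a_i=0$. Since each $a_i\in B$, Leibniz gives
\[
0=\sum_{i=1}^{m}\sum_{k=0}^{i-1} x^{k} u x^{i-1-k} a_i .
\]
The $a_i$ lie in $C(B^{\rho,D})$, and by the Remark they commute with $x$ and with $u$. So the right side equals
\[
\sum_{k=0}^{m-1}\Bigl(\sum_{i=k+1}^{m} x^{i-1-k}a_i\Bigr) u x^{k}=\sum_{k=0}^{m-1} y_k u x^k=\tau(u),
\]
where I regroup by the exponent $k$ of the right-hand power of $x$ and recognise $y_k=\sum_{i=k+1}^{m}x^{i-1-k}a_i$. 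The only care needed is the index bookkeeping in this regrouping.

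For the converse, let $u\in A_1\cap{\rm Ker}(\tau)$ and choose a lift $g=\sum_j X^j c_j\in R$ of $u$ with $\deg g<m$. The plan is to check that $g\in R_1$, build a derivation of $R$ with Lemma~\ref{lemDR}, and show that it descends to $A$.

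\begin{enumerate}
\item \textbf{$g\in R_1$.} The condition $\alpha u=u\rho(\alpha)$ in $A$ says $\alpha g-g\rho(\alpha)\in fR$. Since $f$ is monic and, by Lemma~\ref{lem00}, $\alpha g-g\rho(\alpha)$ has degree $<m$, this difference is $0$ in $R$. So $g\in R_1$.
\item \textbf{A derivation of $R$.} Lemma~\ref{lemDR} then gives a $B$-derivation $\Delta$ of $R$ with $\Delta(X)=g$.
\item \textbf{Reduction to $\Delta(f)\in fR$.} For $\Delta$ to induce a derivation of $A$ it suffices that $\Delta(fR)\subset fR$. Since $\Delta(fh)=\Delta(f)h+f\Delta(h)$, this reduces to $\Delta(f)\in fR$.
\item \textbf{$\Delta(f)\in fR$.} Compute $\Delta(f)$ modulo $fR$. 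By the same Leibniz expansion as in the first part, now carried out in $A$, its image is $\sum_i\sum_k x^k u x^{i-1-k}a_i$. This equals $\tau(u)=0$, provided the commutations used above remain valid: $a_i$ commutes with $x$, and $a_i$ commutes with $u$ because $u\in A_1$.
\item \textbf{Conclusion.} The induced map $\delta(h+fR)=\Delta(h)+fR$ is well defined, is a derivation of $A$, vanishes on $B$, and satisfies $\delta(x)=u$.
\end{enumerate}

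The main obstacle is justifying the commutation $a_iu=ua_i$ for $u\in A_1$ inside the Leibniz expansion. The Remark grants it, and it follows from $a_i\in B$ together with $\rho(a_i)=a_i$. The rest is routine index manipulation.
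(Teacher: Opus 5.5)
Your proof is correct and follows the paper's argument: you get $\delta(x)\in A_1$ by applying $\delta$ to $\alpha x = x\rho(\alpha)+D(\alpha)$, and $\tau(\delta(x))=0$ by the Leibniz expansion of $\delta\bigl(\sum_i x^i a_i\bigr)=0$ with the $a_i$ commuted to the left. For the converse you lift $u$ to $g\in R_1$, take $\Delta$ from Lemma~\ref{lemDR}, and show $\Delta(f)\in fR$. The only differences are cosmetic: you justify $g\in R_1$ by an explicit degree argument, which the paper calls obvious, and you check $\Delta(f)\in fR$ by reducing to $A$ and reusing the first computation, where the paper computes $\Delta(f)=\sum_j Y_j\Delta(X)X^j$ directly in $R$.
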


\begin{proof}
Let $\delta$ be a $B$-derivation of $A$. 
We have $\alpha \delta(x) = \delta(\alpha x ) = \delta(x \rho(\alpha) + D(\alpha)) = \delta(x) \rho(\alpha)$ 
 for any $\alpha \in B$, and hence $\delta(x) \in A_1$. 
An easy induction shows that 
\begin{align*}
\delta(x^{k+1}) =\sum_{j=0}^k x^{k-j} \delta(x) x^j \ \ (k \geq 0).
\end{align*} 
Then, since $0 = \sum_{k=0}^m x^k a_k$ and $y_j = \sum_{k=j}^{m-1}x^{k-j}a_{k+1}$, we  see that 
\begin{align*}
0 &= \delta \left( \sum_{k=0}^m x^k a_k \right)\\
&= \sum_{k=1}^m  \delta(x^k) a_k\\
&= \sum_{k=0}^{m-1} \delta (x^{k+1} )  a_{k+1} \\
&= \sum_{k=0}^{m-1} \left( \sum_{j=0}^k x^{k-j} \delta(x) x^j \right) a_{k+1}  \\
&= \sum_{j=0}^{m-1} \left( \sum_{k=j}^{m-1} x^{k-j} a_{k+1} \right) \delta(x) x^j\\
&= \sum_{j=0}^{m-1} y_j \delta(x) x^j\\
&= \tau(\delta(x)).
\end{align*}
Therefore $\delta(x) \in {\rm Ker}(\tau)$. 

Conversely, assume that $u  \in A_1 \cap {\rm Ker}(\tau)$. 
Let $u_0$ be in $R$ such that $u=u_0+ fR $ and ${\rm deg} ( u_0) <m$.  
Obviously, $u_0 \in R_1$ because $u \in A_1$. 
Hence, by Lemma \ref{lemDR}, there exists a $B$-derivation $\Delta$ of $R$ such that $\Delta(X) = u_0$. 
Since $u \in {\rm Ker}(\tau)$, we have
\[
0= \tau(u)= \sum_{j=0}^{m-1} y_j u x^j = \sum_{j=0}^{m-1} Y_j u_0  X^j + fR.  
\]
This means that $\sum_{j=0}^{m-1} Y_j \Delta(X) X^j\in fR$. 
So we obtain  
\begin{align*}
\Delta(f) 
&=\sum_{k=1}^m \Delta(X^k) a_{k}\\
&= \sum_{k=0}^{m-1} \Delta(X^{k+1}) a_{k+1}\\
&= \sum_{k=0}^{m-1} \left( \sum_{j=0}^k X^{k-j} \Delta(X) X^j \right) a_{k+1}\\
&= \sum_{j=0}^{m-1} \left(\sum_{k=j}^{m-1} X^{k-j} a_{k+1}\right) \Delta(X) X^j\\ 
&= \sum_{j=0}^{m-1} Y_j \Delta(X) X^j\\ 
& \in fR. 
\end{align*}
This implies  that $\Delta(f g) = \Delta(f) g + f \Delta(g) \in fR$ for any $g \in R$, 
namely,   $\Delta(fR) \subset fR$.  
Thus there exists a $B$-derivation $\delta$ of $A$ such that 
$\delta(x)  =u$ which is naturally induced by $\Delta$. 
\end{proof}
\ss

Now we shall state the following theorem which is a generalization of 
\cite[Theorem 3.2] {Y1} and \cite[Theorem 3.8]{Y1}. 
\ss
\begin{thm}\label{thmWS1}
$f$ is weakly separable in $R$ if and only if 
\[
A_1 \cap {\rm Ker}(\tau)  =  I_x(V).
\] 
\end{thm}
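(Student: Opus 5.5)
The plan is to combine Lemma \ref{lemDA} and Lemma \ref{lemT} (2). The key observation is that a $B$-derivation of $A$ is determined by its value at $x$. Indeed, $\{1,x,\dots,x^{m-1}\}$ generates $A$ as a right $B$-module, so any $B$-derivation $\delta$ satisfies
\[
\delta\Big(\sum_j x^j c_j\Big)=\sum_j \delta(x^j)c_j ,
\]
and each $\delta(x^j)$ is determined by $\delta(x)$ through the Leibniz rule. Hence two $B$-derivations that agree on $x$ coincide.

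Suppose first that $f$ is weakly separable, and let $u\in A_1\cap{\rm Ker}(\tau)$.
\begin{itemize}
\item By Lemma \ref{lemDA} there is a $B$-derivation $\delta$ of $A$ with $\delta(x)=u$.
\item By weak separability, $\delta$ is inner: there is $v\in A$ with $\delta(z)=vz-zv$ for all $z\in A$.
\item Since $\delta(B)=0$, we get $v\alpha=\alpha v$ for every $\alpha\in B$, so $v\in V$.
\item Then $u=\delta(x)=vx-xv=I_x(v)\in I_x(V)$.
\end{itemize}
This gives $A_1\cap{\rm Ker}(\tau)\subset I_x(V)$. The reverse inclusion $I_x(V)\subset A_1\cap{\rm Ker}(\tau)$ is exactly Lemma \ref{lemT} (2), so equality holds.

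Conversely, assume $A_1\cap{\rm Ker}(\tau)=I_x(V)$, and let $\delta$ be any $B$-derivation of $A$.
\begin{itemize}
\item By Lemma \ref{lemDA}, $\delta(x)\in A_1\cap{\rm Ker}(\tau)$.
\item By the hypothesis, $\delta(x)=I_x(v)=vx-xv$ for some $v\in V$.
\item The map $\delta'(z)=vz-zv$ is a derivation of $A$. It vanishes on $B$ because $v$ centralizes $B$, so $\delta'$ is a $B$-derivation.
\item We have $\delta'(x)=\delta(x)$, so $\delta-\delta'$ is a $B$-derivation vanishing on $x$ and on $B$.
\end{itemize}
Since $A$ is generated as a ring by $B$ and $x$, the Leibniz rule forces $\delta-\delta'=0$. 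Therefore $\delta=\delta'$ is inner, and $f$ is weakly separable.

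The proof has no real obstacle. The one point needing care is the sign convention for inner derivations: $I_x(z)=zx-xz$ must match the inner derivation induced by $v$, namely $\delta(z)=vz-zv$. With these conventions $\delta(x)=vx-xv=I_x(v)$, as used in both directions above.
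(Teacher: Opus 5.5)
Your proposal is correct and follows the paper's proof essentially verbatim: both directions rest on Lemma \ref{lemDA} and Lemma \ref{lemT}~(2). In the converse, your step of subtracting the inner derivation $z\mapsto vz-zv$ and observing that a $B$-derivation killing $x$ must vanish is just a repackaging of the paper's induction $\delta(x^j)=vx^j-x^jv$ followed by right $B$-linearity.
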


\begin{proof}

Note that 
$I_x(V) \subset {\rm Ker}(\tau) \cap A_1$ by Lemma \ref{lemT} (2).   

Assume that $f$ is weakly separable in $R$, that is, every $B$-derivation of $A$ is inner. 
Let $u \in A_1 \cap {\rm Ker}(\tau)$. 
By Lemma \ref{lemDA}, there exists a $B$-derivation $\delta$ of $A$ such that $\delta(x) =u$. 
Since $\delta$ is inner, we have $u=\delta(x) = vx-xv$ for some fixed element $v \in A$. 
In particular, we see that $v \in V$ because $0=\delta(\alpha)=v \alpha -\alpha v$ 
for any  $\alpha \in B$. 
We have then  $u=\delta(x) \in I_x(V)$, namely, $A_1 \cap {\rm Ker}(\tau) \subset I_x(V)$.  
Therefore  $A_1 \cap {\rm Ker}(\tau) = I_x(V)$ by Lemma \ref{lemT} (2).   

Conversely, assume that $A_1 \cap {\rm Ker}(\tau) = I_x(V)$, 
and let $\delta$ be a $B$-derivation of $A$. 
By Lemma \ref{lemDA}, we see that $\delta(x) \in A_1 \cap {\rm Ker}(\tau)= I_x(V)$. 
Hence $\delta(x) = vx -xv$ for some $v \in V$. 
An easy induction shows that 
\[
\delta(x^j) = v x^j -x^j v \ \  (j \geq 0). 
\]
So, for any $z=\sum_{j=0}^{m-1} x^j c_j \in A$ ($c_j \in B$), 
we have 
\begin{align*}
\delta(z) &= \delta\left( \sum_{j=0}^{m-1} x^j c_j \right)\\
&= \sum_{j=0}^{m-1} \delta(x^j) c_j \\
&= \sum_{j=0}^{m-1} ( v x^j -x^j v) c_j \\
&= v \sum_{j=0}^{m-1}x^jc_j - \sum_{j=0}^{m-1}x^jc_j v\\
&= v z - zv. 
\end{align*}
Therefore $\delta$ is inner, and hence $f$ is weakly separable in $R$. 
\end{proof}
\ss

In virtue of Theorem \ref{thmWS1}, we have the following. 
\ss 
\begin{cor}\label{corWS}
$f$ is weakly separable in $R$ if and only if 
the following sequence of $C(A)$-$C(A)$-homomorphisms is exact: 
\[
V \overset{I_x}{\longrightarrow} A_1 \overset{\tau}{\longrightarrow}  A.
\] 
\end{cor}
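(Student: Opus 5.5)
This corollary is a reformulation of Theorem \ref{thmWS1} in homological language, so the plan is to check that the displayed sequence makes sense and then match its exactness with the equality $A_1 \cap {\rm Ker}(\tau) = I_x(V)$.

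\emph{Step 1: the maps are defined and are $C(A)$-$C(A)$-homomorphisms.} By Lemma \ref{lemT} (2), $I_x(V) \subset A_1$, so $I_x$ restricts to a map $V \to A_1$. The second arrow is the restriction of $\tau$ to $A_1$. We must also check that $V$ and $A_1$ are $C(A)$-$C(A)$-submodules of $A$. If $c \in C(A)$ and $u \in A_k$, then $\alpha (cu) = c \alpha u = c u \rho^k(\alpha)$, so $cu = uc \in A_k$.
\begin{itemize}
\item Since $I_x(z) = zx - xz$ and $c$ commutes with $x$, we get $I_x(cz) = cI_x(z)$ and $I_x(zc) = I_x(z)c$.
\item The paper already notes that $\tau$ is a $C(A)$-$C(A)$-endomorphism of $A$.
\end{itemize}

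\emph{Step 2: exactness at $A_1$.} Exactness of the sequence means ${\rm Im}(I_x|_V) = {\rm Ker}(\tau|_{A_1})$. Here ${\rm Im}(I_x|_V) = I_x(V)$, and ${\rm Ker}(\tau|_{A_1}) = A_1 \cap {\rm Ker}(\tau)$. So exactness is literally the condition $A_1 \cap {\rm Ker}(\tau) = I_x(V)$.

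\emph{Step 3: conclude.} By Theorem \ref{thmWS1}, this condition is equivalent to $f$ being weakly separable in $R$, which gives the corollary.

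No step presents a real obstacle. The only care needed is in Step 1, namely verifying that $A_1$ and $V$ are $C(A)$-$C(A)$-submodules so that the sequence lives in the stated category; this is the short computation above.
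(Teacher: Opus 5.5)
Your proposal is correct and follows the paper's argument exactly: the paper simply says the corollary is obvious from Theorem~\ref{thmWS1}, and you have spelled out that exactness at $A_1$ means $I_x(V) = A_1 \cap {\rm Ker}(\tau)$, with Lemma~\ref{lemT}~(2) ensuring that $I_x$ maps $V$ into $A_1$. Your additional check that $V$ and $A_1$ are $C(A)$-$C(A)$-submodules and that $I_x$ and $\tau$ are $C(A)$-$C(A)$-linear is correct; it makes explicit what the paper leaves implicit.
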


\begin{proof} 
It is obvious by Theorem \ref{thmWS1}. 
\end{proof}
\ss

\begin{rmk}
There always holds ${\rm Ker} (I_x : V \to A_1) = C(A)$. 
Hence  
$f$ is weakly separable in $R$ if and only if 
the following sequence of $C(A)$-$C(A)$-homomorphisms is exact: 
\[
0 \longrightarrow C(A)  \overset{{\rm inclusion}}{\longrightarrow}  
V \overset{I_x}{\longrightarrow} A_1 \overset{\tau}{\longrightarrow}  A.
\] 
\end{rmk}
\ss

Concerning the relation between  separability and  weak separability in $B[X;D]$, 
we shall  state the following  theorem which is an improvement 
of \cite[Theorem 3.10]{Y1}. 

\ss
\begin{thm}\label{WS2} 
Let $R=B[X;D]$, $R_{(0)}=B[X;D]_{(0)}$, and  
$f$  a monic polynomial  in $R_{(0)}$ of the form 
$f= \sum_{i=0}^m X^i a_i$ $(a_m=1, m \geq 1)$. 
\begin{enumerate}
\item  $f$ is weakly separable in $R$ if and only if 
the following sequence of $C(A)$-$C(A)$-homomorphisms is exact: 
\[
V \overset{I_x}{\longrightarrow} V  \overset{\tau}{\longrightarrow} C(A).
\] 

\item $f$ is separable in $R$  if and only if 
the following sequence of $C(A)$-$C(A)$-homomorphisms is exact: 
\[
V \overset{I_x}{\longrightarrow} V \overset{\tau}{\longrightarrow} C(A) \longrightarrow 0. 
\] 
\end{enumerate}
\end{thm}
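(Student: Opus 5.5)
With $\rho=1$ we have $B^\rho=B$, so every $f\in R_{(0)}$ lies in $R_{(0)}\cap B^\rho[X]$. Hence all results of Section 3 apply, and by Corollary \ref{cor01} each $a_i$ lies in $C(B^{D})$. Moreover $A_k=V$ for every $k$, since $\rho^k=1$. The plan is to reduce both parts to Corollary \ref{corWS} and Lemma \ref{propM}; the only genuinely new point is that $\tau$ maps $V$ into $C(A)$.

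\textbf{Step 1: $\tau(V)\subset C(A)$.} Let $u\in V=A_0$. In the proof of Lemma \ref{lemT}(1) we showed $x\tau(u)=\tau(u)x$ for $u$ in any $A_k$, so $\tau(u)$ commutes with $x$. It remains to check that $\tau(u)$ commutes with every $\alpha\in B$, i.e.\ $\tau(u)\in V$. Since $A$ is generated by $B$ and $x$, these two facts give $\tau(u)\in C(A)$.

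To show $\alpha\tau(u)=\tau(u)\alpha$, I would avoid computing $\alpha y_j$ directly and argue through $\tau$ itself, as follows. The inner derivation $I_\alpha\colon z\mapsto z\alpha-\alpha z$ of $A$ is a $B^{D}$-derivation rather than a $B$-derivation, so Lemma \ref{lemDA} does not apply as stated. Instead, consider the element $w=\alpha x-x\alpha=D(\alpha)\in B$. We have $I_x(\alpha)=\alpha x-x\alpha=D(\alpha)$ up to sign, and $\alpha\in A_0$, so Lemma \ref{lemT}(1) gives $\tau(D(\alpha))=0$ for every $\alpha\in B$.

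Now expand $\alpha\tau(u)-\tau(u)\alpha=\sum_j(\alpha y_j-y_j\alpha)ux^j+\sum_j y_ju(\alpha x^j-x^j\alpha)$, using $\alpha u=u\alpha$. Here $\alpha x^j-x^j\alpha=\sum_{k<j}x^k\binom{j}{k}D^{j-k}(\alpha)$ by Lemma \ref{lem00}, and $\alpha y_j-y_j\alpha$ is computed the same way from $y_j=\sum_k x^{k-j}a_{k+1}$, using that the $a_i$ commute with $B$. After reindexing, the two sums should cancel: the identity needed is essentially $\Delta(f)=0$ for the $B^D$-derivation $\Delta=[\alpha,-]$ restricted to powers of $x$, together with the relation $\alpha f=f\alpha$ from Lemma \ref{lemc1}(1).

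This cancellation is the step I expect to be the main obstacle. If the direct computation becomes unwieldy, I would use the cleaner route instead: $\tau$ is a $C(A)$-bimodule map, and the map $u\mapsto\tau(u)$ can be expressed through the element $\sum_j y_j\otimes x^j$. That element is known (Miyashita) to commute with $B$ in $A\otimes_BA$ when $\rho=1$, and this immediately gives $\alpha\tau(u)=\tau(\alpha u)=\tau(u\alpha)=\tau(u)\alpha$.

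\textbf{Step 2: part (1).} By Corollary \ref{corWS} with $A_1=V$, $f$ is weakly separable if and only if $V\overset{I_x}{\to}V\overset{\tau}{\to}A$ is exact. By Step 1 the image of $\tau$ on $V$ lies in $C(A)$, so the codomain may be replaced by $C(A)$ without changing the kernel.

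\textbf{Step 3: part (2).} Lemma \ref{propM} with $\rho=1$ says that $f$ is separable if and only if $1\in\tau(V)$. Since $\tau$ is $C(A)$-linear with values in $C(A)$ by Step 1, this is equivalent to surjectivity of $\tau\colon V\to C(A)$. Separability implies weak separability, and weak separability gives exactness at the middle $V$ by part (1). Conversely, if the full sequence is exact, surjectivity yields $u\in V$ with $\tau(u)=1$, so $f$ is separable.
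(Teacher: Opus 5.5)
Steps 2 and 3 are exactly the paper's argument, and your fallback for Step 1 is essentially the paper's as well. The paper quotes [YI1, Lemma 3.1], that $\sum_j y_jv\otimes x^j\in(A\otimes_BA)^A$ for $v\in V$, and applies the multiplication map. You use only that $\sum_j y_j\otimes x^j$ centralizes $B$, and take commutation with $x$ from the proof of Lemma \ref{lemT}(1); that is the same idea.

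Keep that route, but state it correctly. The chain $\alpha\tau(u)=\tau(\alpha u)=\tau(u\alpha)=\tau(u)\alpha$ is false as written: already $\tau(\alpha u)=\sum_j y_j\alpha ux^j$ differs from $\alpha\tau(u)=\sum_j\alpha y_jux^j$ in general. The correct deduction runs as follows. For $u\in V$ the map $\psi_u\colon A\otimes_BA\to A$, $z\otimes w\mapsto zuw$, is a well-defined $A$-$A$-homomorphism; this is where $u\in V$ enters. Since $\tau(u)=\psi_u(\sum_jy_j\otimes x^j)$, you get $\alpha\tau(u)=\psi_u(\alpha\sum_jy_j\otimes x^j)=\psi_u(\sum_jy_j\otimes x^j\alpha)=\tau(u)\alpha$.

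The first route you sketch for Step 1 should be deleted, because it rests on two false claims.

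First, an element $\alpha\in B$ lies in $A_0=V$ only when $\alpha$ is central in $B$, so Lemma \ref{lemT}(1) does not give $\tau(D(\alpha))=0$. This actually fails in the Example after Theorem \ref{WS2}: for $e=\left[\begin{matrix}0&1\\0&0\end{matrix}\right]$ one has $D(e)=e$ and $\tau(e)=(x+a)e+ex=x(2e)+4e\neq 0$.

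Second, the $a_i$ commute with $B^D$ (Corollary \ref{cor01}), not with $B$. Lemma \ref{lemc1}(1) with $\rho=1$ gives $a_{m-1}\alpha-\alpha a_{m-1}=mD(\alpha)$; in that Example, $a\alpha-\alpha a=2D(\alpha)$. These commutators are exactly what make $\sum_jy_j\otimes x^j$ centralize $B$. Writing $[\alpha,z]=\alpha z-z\alpha$, for $m=2$ one gets $\alpha\cdot\sum_jy_j\otimes x^j-\sum_jy_j\otimes x^j\cdot\alpha=[\alpha,x+a_1]\otimes 1+1\otimes[\alpha,x]=-D(\alpha)\otimes 1+D(\alpha)\otimes 1=0$. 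Pulling $a_1$ through $\alpha$ as you propose would instead leave $2D(\alpha)\otimes 1$, which is nonzero in general.

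So your direct expansion can only close if it uses Lemma \ref{lemc1}(1) in place of ``the $a_i$ commute with $B$''. Since you do not carry that computation out, Step 1 rests, exactly as in the paper, on the cited fact about $\sum_j y_j\otimes x^j$.
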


\begin{proof} 
Note that $A_k=V$ ($k \in \mathbb{Z}$) in this case.  
First we shall show that $\tau (V) \subset C(A) $. 
Let 
$\varphi$ be an $A$-$A$-homomorphism of $A \otimes_BA$ onto $A$
defined by $\sum_j z_j  \otimes w_j  \mapsto \sum_j z_j w_j$ ($z_j,w_j \in A$) 
and $(A \otimes_B A)^A = \{ \mu \in A \otimes_B A \, | \, z \mu = \mu z \ (\forall z \in A)\} $.  
It is obvious that $\varphi \le( (A \otimes_B A)^A \ri) \subset C(A)$.  
Let $v$ be  arbitrary element in $V$. 
As was shown in \cite[Lemma 3.1]{YI1}, 
we have already known that 
$\sum_{j=0}^{m-1} y_j v \otimes x^j \in (A \otimes_B A)^A$,  
and hence 
\[
\tau(v) = \sum_{j=0}^{m-1} y_j v x^j 
=\varphi \left( \sum_{j=0}^{m-1} y_j v \otimes x^j \right) \in C(A).
\] 
Therefore  $\tau(V) \subset C(A)$. 

(1) It is obvious by Corollary \ref{corWS}. 

(2)  If $f$ is separable in $R$  then $f$ is always weakly separable in $R$, 
and therefore it suffices to show that $\tau(V) = C(A)$.  
By Lemma \ref{propM},  $f$ is separable in $R$ if and only if 
there exists $v \in V$ such that $\tau(v)=1$.  
This means that 
$\tau(V) = C(A)$ because $\tau $ is a $C(A)$-$C(A)$-homomorphism.  
\end{proof}

\begin{exm}
{\rm 
Let $B=\le[ \begin{matrix}
\mathbb{Z} & \mathbb{Z}\\
0 & \mathbb{Z}
\end{matrix}\ri]
$ (the upper triangular matrix over $\mathbb{Z}$), 
$D$ a derivation of $B$ defined by 
$D\le(\le[ \begin{matrix}
b_1 & b_2\\
0 & b_3
\end{matrix}\ri] 
\ri)
=\le[ \begin{matrix}
0 & b_2\\
0 & 0
\end{matrix}\ri]$ ($b_1,b_2,b_3 \in \mathbb{Z}$), 
$R=B[X;D]$, and $R_{(0)}=B[X;D]_{(0)}$.  
We put here $a=\le[ \begin{matrix}
3 & 0\\
0 & 1
\end{matrix}\ri] \in B$ and 
$f=X^2+Xa + a \in R$.  
It is easy to see that $\al f = f\al$ for any $\al \in B$ and 
$Xf = fX$, and hence $f \in R_{(0)}$. 
Now let $A=R/fR$ and  $x = X+fR$. 
One easily see  that   
\[
V=C(A)=\le\{ x \le[ \begin{matrix}
s & 0\\
0 & s
\end{matrix}\ri] + 
\le[ \begin{matrix}
s+t & 0\\
0 & t
\end{matrix}\ri]  \, 
\bigg|  \, s,t \in \mathbb{Z} \ri\}. 
\]
Let $v = xb+c$ be  arbitrary element in  $V$ 
such that 
$b
=\le[ \begin{matrix}
s & 0\\
0 & s
\end{matrix}\ri]$, 
$c=\le[ \begin{matrix}
s+t & 0\\
0 & t
\end{matrix}\ri] $ ($s,t \in \mathbb{Z}$). 
Since $xb =bx$ and $xc = cx$,  we obtain 
\[
I_x(v) = vx -xv = (xb+c) x -x(xb+c) = 0. 
\]
Thus $I_x(V)=\{ 0\}$. 
Recall that $y_0 =x+a$ and $y_1=1$ in this case.  
We have then  
\begin{align*}
\tau(v) &= y_0 v + y_1 v x\\
&= (x+a) (xb+c) + (xb+c)x\\
&= x^2 b + x(ab+c) + ac + x^2 b +xc\\
&= x^2 2b + x(ab+2c) + ac\\
&= (-xa-a) 2b + x(ab+2c) + ac\\
&= x(2c-ab) + ac -2ab\\
&= x \le( \le[ \begin{matrix}
2(s+t) & 0\\
0 & 2t
\end{matrix}\ri] - 
\le[ \begin{matrix}
3 & 0\\
0 & 1
\end{matrix}\ri]
\le[ \begin{matrix}
s & 0\\
0 & s
\end{matrix}\ri]\ri) \\
&  \ \ \ \ \ 
+ \le[ \begin{matrix}
3 & 0\\
0 & 1
\end{matrix}\ri]
\le[ \begin{matrix}
s+t & 0\\
0 & t
\end{matrix}\ri]
-2 
\le[ \begin{matrix}
3 & 0\\
0 & 1
\end{matrix}\ri]
\le[ \begin{matrix}
s & 0\\
0 & s
\end{matrix}\ri]\\
&= x \le[ \begin{matrix}
2t-s & 0\\
0 & 2t-s
\end{matrix}\ri]
+ \le[ \begin{matrix}
3t - 3s & 0\\
0 & t -2s
\end{matrix}\ri].
\end{align*} 
So we see that  $s=t=0$ (i.e. $v=0$) if $v \in {\rm Ker}(\tau)$.  
Therefore ${\rm Ker}(\tau)\cap V =\{ 0 \} = I_x(V)$, 
and hence $f$ is weakly separable in $R$ by Theorem \ref{WS2} (1). 
However, it is obvious  that $\tau(V) \subsetneq C(A)$ (for example, 
there are no elements $u \in V$ such that $\tau(u) =
 \le[ \begin{matrix}
1 & 0\\
0 & 1
\end{matrix}\ri] \in C(A)$). 
Thus $f$ is not separable in $R$ by  Theorem \ref{WS2} (2). 
}
\end{exm}

\section*{Acknowledgement}

The author would like to thank the referee for his (her) valuable suggestions and comments.

%
%
%
%
%
%

\end{document}